
\documentclass[10pt,onecolumn]{IEEEtran}

\usepackage{graphics} 
\usepackage{epsfig} 

\usepackage{times,verbatim} 
\usepackage{amsmath, amssymb}
\usepackage{amsthm}
\usepackage{thmtools}
\usepackage{graphicx}
\usepackage{subfigure}
\usepackage{epstopdf}
\usepackage{algorithm}
\usepackage{algorithmic}
\usepackage{enumerate}
\usepackage{color}

\newtheorem{theorem}{Theorem}[section]
\newtheorem{lemma}[theorem]{Lemma}

\newtheorem{problem}[theorem]{Problem}
\newtheorem{proposition}[theorem]{Proposition}

\newtheorem{definition}[theorem]{Definition}

\numberwithin{equation}{section}

\newcommand{\R}{{\mathbb{R}}}

\newcommand{\B}{{\mathbb B}}

\newcommand{\N}{{\mathbb{N}}}




\newcommand{\norm}[1]{\left\Vert#1\right\Vert_2}
\renewcommand{\matrix}[1]{\begin{bmatrix}#1\end{bmatrix}}

\newcommand{\supp}{\textrm{supp}}

\usepackage{tikz}
\usetikzlibrary{shapes,shadows,calc,snakes}
\usetikzlibrary{decorations.pathmorphing,patterns}
\usetikzlibrary{decorations}
\usetikzlibrary{decorations.mypathmorphing}
\usepackage{pgfplots}

\IEEEoverridecommandlockouts

\title{\LARGE \bf A Satisfiability Modulo Theory Approach to Secure State Reconstruction  in Differentially Flat Systems Under Sensor Attacks}

\author{Yasser Shoukry,  Pierluigi Nuzzo, Nicola Bezzo, \\
Alberto L. Sangiovanni-Vincentelli, Sanjit A. Seshia, and Paulo Tabuada
\thanks{Y. Shoukry and P. Tabuada are with the Electrical
Engineering Department, U.C. Los Angeles, {\tt\footnotesize \{yshoukry, tabuada\}@ucla.edu}}%
\thanks{P. Nuzzo, A. L. Sangiovanni-Vincentelli, and S. A. Seshia are with the Department of Electrical Engineering and Computer Sciences, U.C. Berkeley, {\tt\footnotesize \{nuzzo,alberto,sseshia\}@eecs.berkeley.edu}}
\thanks{N. Bezzo is with the PRECISE Center, University of Pennsylvania, {\tt\footnotesize nicbezzo@seas.upenn.edu}}%
\thanks{This work was partially sponsored by the NSF award 1136174, by DARPA under agreement number FA8750-12-2-0247, by TerraSwarm, one of six centers of STARnet, a Semiconductor Research Corporation program sponsored by MARCO and DARPA, and by the NSF project ExCAPE: Expeditions in Computer Augmented Program Engineering (award 1138996). The U.S. Government
is authorized to reproduce and distribute reprints for Governmental purposes
notwithstanding any copyright notation thereon. The views and conclusions
contained herein are those of the authors and should not be interpreted
as necessarily representing the official policies or endorsements, either
expressed or implied, of NSF, DARPA or the U.S. Government.
}
}

\begin{document}

\maketitle

\begin{abstract}
We address the problem of estimating the state of a differentially flat system from measurements that may be corrupted by an adversarial attack. In cyber-physical systems, malicious attacks can directly compromise the system's sensors or manipulate the communication between sensors and controllers. 
We consider attacks that only corrupt a subset of sensor measurements. We show that the possibility of reconstructing the state under such attacks is characterized by a suitable generalization of the notion of s-sparse observability, previously introduced by some of the authors in the linear case. We also extend our previous work on the use of Satisfiability Modulo Theory solvers to estimate the state under sensor attacks to the context of differentially flat systems. The effectiveness of our approach is illustrated on the problem of controlling a quadrotor under sensor attacks.
%
%
%
\end{abstract}

\section{Introduction}

The increasing dependence on sensors and cyber components (e.g. digital processors and networks) to monitor and control a broad range of today's critical infrastructures is at the outset of unprecedented vulnerabilities and malicious attacks. A striking example of such attacks is the Stuxnet virus targeting SCADA systems \cite{stuxnet}. In this attack, sensor measurements are replaced by previously recorded data, which, once they are fed to the controller, can lead to catastrophic situations. Other examples include the injection of false data in ``smart'' systems \cite{LiuPowerAttack}, and the non-invasive sensor spoofing attacks in the automotive domain \cite{YasserABS}.

To secure these cyber-physical systems, 
a possible strategy is to exploit 
an accurate mathematical model of the dynamics of the physical system under control, and analyze any discrepancy between the actual sensor measurements and the ones predicted by the model, to decide about the existence of an adversarial attack~\cite{Bullo_CSM,Hamza_TAC}. Once the malicious sensors, if any, are detected and identified, it is then possible to estimate the actual system state by using the data collected from the attack-free sensors. In the following, we refer to this approach as \emph{secure state reconstruction}.

The problem of state reconstruction in the presence of disturbances, in its general form, has attracted considerable attention from the control community over the years. Previous work addresses the problem in terms of robust filter (estimator) design against outliers~\cite{BoydKF, Mitter, Georgios_TSP2}. However, the lack of a priori knowledge about the attack signals tends to limit the applicability of robust estimation techniques  to security problems. In secure state reconstruction, no assumptions are usually made about the attacks, e.g.,~in terms of their stochastic properties, time evolution, or energy bounds.  
%

A game theoretic formulation for the secure state reconstruction problem has been proposed in the literature, when the physical system is scalar~\cite{Sinopoli_TAC2}. An alternative reconstruction technique, still in the context of a scalar system equipped with one sensor, has also been derived based on the analysis of the performance degradation of a Kalman filter when the sensor is under attack~\cite{Bai_Gupta}. Finally, the general case of a multidimensional system equipped with multiple sensors has been tackled~\cite{Bullo_TAC, Hamza_TAC, YasserETPGarXiv, YasserCDC_ET, Pajic_ICCPS, Yasser_SMT_ACC, Yasser_SMT, Joao_ACC, Kalle_TAC} when the attackers are restricted to corrupt an unknown subset of the system sensors. However, all of the above contributions focus on problems for which the underlying dynamics can be described by a linear system. 

Unlike previous work, we focus in this paper on the problem of
secure state reconstruction for a class of nonlinear systems. Specifically, we consider physical systems whose dynamics can be described by a differentially flat system~\cite{Fliess95flatnessand}. Differentially flat systems represent an important class of nonlinear systems, in that they encompass a wide range of mechanical systems, including several examples of ground and aerial vehicles.

While differentially flat systems can be converted into linear systems using dynamic feedback linearization and a change of coordinates, this technique would, however, require the knowledge of the system state. Since this is clearly our ultimate goal, it is not possible to directly apply the results from linear secure state reconstruction to differentially flat systems. We follow instead a different approach, by extending the notion of $s$-sparse observability~\cite{YasserETPGarXiv, YasserCDC_ET} from linear systems to 
nonlinear systems. Similarly to linear systems, we show that $s$-sparse observability provides a necessary and sufficient condition for the full reconstruction of the state regardless of the attack. Resting on this concept, we can then build on our previous work on sound and complete secure state reconstruction for linear systems~\cite{Yasser_SMT_ACC,Yasser_SMT}, to develop an algorithm that can efficiently identify the corrupted sensors by leveraging Satisfiability Modulo Theory (SMT) solving~\cite{smtbook} to tackle the combinatorial aspects of the problem. We illustrate our algorithm on the problem of controlling and stabilizing a quadrotor, while some of its sensors are under attack.

The rest of this paper is organized as follows. Section~\ref{sec:problem} formulates the secure state reconstruction problem. In Section~\ref{sec:obs}, we generalize the notion of $s$-sparse observability to nonlinear systems and then show that it is the necessary and sufficient condition to reconstruct the state in spite of the attack. Section~\ref{sec:theory} presents the generalization of our previous SMT-based attack detection and reconstruction algorithm to differentially flat systems. In Section~\ref{sec:results}, we discuss its application to the quadrotor case study. We finally draw some conclusions in Section~\ref{sec:conclusion}. 

%
\section{The Secure State Reconstruction Problem}
\label{sec:problem}

\subsection{Notation}
The symbols $\N, \R$ and $\B$ denote the sets of natural, real, and Boolean numbers, respectively. 
If $S$ is a set, we denote by $|S|$ its cardinality. The support of a vector $x\in \R^n$, 
denoted by $\supp(x)$, is the set of indices of the non-zero components of $x$. Similarly, the complement of the support of a vector $x$ is denoted by $\overline{\supp(x)} = \{1,\hdots,n\}\setminus\supp(x)$. We call a vector $x \in \R^n$ $s$-sparse, if  $x$ has $s$ nonzero
elements, i.e., if  $\vert \supp(x)\vert=s$. 

Let $f: \R^n \rightarrow \R^m$ be a function given by $f(x)=(f_1(x),\hdots,f_m(x))$, where  $f_i : \R^n \rightarrow \R$ is the $i${th} component of $f$. Then, for the set $\Gamma \subseteq \{1, \hdots, m\}$, we denote by
$f_{\Gamma}$ the vector function obtained from $f$ by removing all the components except those indexed by $\Gamma$. 
Similarly, $f_{\overline{\Gamma}}$ is obtained from $f$ by removing the components indexed by $\Gamma$. 
Finally, we use the notation $\nabla_x f$ to denote the Jacobian matrix of $f$ evaluated at $x$.



\subsection{Dynamics and Attack Model}

We consider a system of the form:
\begin{align}
\Sigma_a \quad
\begin{cases}
	x^{(t+1)} &= f\left(x^{(t)},  u^{(t)}\right), \\
	y^{(t)} &= h\left(x^{(t)} \right) + a^{(t)}
\end{cases}
\label{eq:system_attack}
\end{align} 
where $x^{(t)} \in \mathcal{X} \subseteq \R^n$ is the system state, $u^{(t)} \in \mathcal{U} \subseteq \R^m$ is the system input, and $y^{(t)} \in \R^{p}$ is the observed output, all at time  $t \in \N$. The map
$f: \mathcal{X} \times \mathcal{U}  \rightarrow \mathcal{X}$ 
represents the system dynamics.  We will use the notation $f_u(x) = f(x,u)$ in the remainder of this paper. We also use the notation $f_{u_k u_{k-1} \ldots  u_{1}}(x)$ to denote the $k$-fold composition of $f$, i.e., 
$$ f_{u_k u_{k-1} \ldots  u_{1}}(x)= f\left(f\left(f\left(f\left(u^{(1)}, x\right), \ldots\right), u^{(k-1)}\right), u^{(k)}\right).$$

An attacker corrupts the sensor measurements $y$ by either spoofing the sensor outputs, or by manipulating the data transmitted from the sensors to the controller. Independently of how the attack is implemented, its effect can be described by the $s$-sparse vector $a^{(t)} \in \R^p$.  If sensor $i \in \{1, \hdots ,p\}$ is attacked then
the $i${th} component of $a^{(t)}$ is non-zero; otherwise the $i${th}
sensor is not attacked. Hence, $s$ describes the number of attacked sensors. 
We make no assumptions on the vector $a^{(t)}$ other than being $s$-sparse. 
In particular, we do not assume bounds, statistical properties, or restrictions on 
the time evolution of the elements in $a^{(t)}$. While the value of $s$ is not
known, we assume the knowledge of an upper bound $\overline{s}$ on it.

\subsection{Problem Formulation}
\label{sec:pform}

Solving the secure state reconstruction problem implies estimating the state $x$ from a set of measurements collected over a window of length $\tau\in \N$. Hence,  we start by grouping the measurements from the $i$th  sensor as:
\begin{align}
Y_i^{(t)} = H_{u,{i}}\left(x^{(t-\tau+1)}\right) + E_i^{(t)} 
\label{eq:output_eqn}
\end{align}
where:
\begin{align*}
	Y_i^{(t)} &= \matrix{
		y_i^{(t-\tau+1)} \\ y_i^{(t-\tau)} \\ \vdots \\ y_i^{(t)}
	}, 
	E_i^{(t)}  =\matrix{ 
		a_i^{(t-\tau+1)} \\ a_i^{(t-\tau)} \\ \vdots \\ a_i^{(t)}
	},
	H_{u,{i}} \left(x^{(t-\tau+1)}\right) = \matrix{
		h_i\left(x^{(t-\tau+1)}\right)  \\ h_i \left(f_{u_{t-\tau+1}} \left(x^{(t-\tau+1)}\right)\right) \\ \vdots \\  h_i \left(f_{u_{t}  \ldots u_{t-\tau+1}} \left(x^{(t-\tau+1)}\right)\right)
	}.
\end{align*}
We then define:
$$ Y^{(t)} = \matrix{Y_1^{(t)} \\ \vdots \\ Y_p^{(t)}}, \quad E^{(t)} = \matrix{E_1^{(t)} \\ \vdots \\ E_p^{(t)}}, \quad H_{u} = \matrix{H_{u,{1}} \\ \vdots \\ {H}_{u,{p}}}$$
where, with some abuse of notation, $Y_i, E_i$ and ${H}_{u,{i}}$
are used to denote the $i$th block of $Y^{(t)}, E^{(t)}$ and ${H}_{u}$, respectively. Using the same notation, we denote by $Y_{\Gamma}, E_{\Gamma}$, and ${H}_{u,{\Gamma}}$ the blocks indexed by the elements in the set $\Gamma$. 
%
Moreover, for simplicity, we drop the time $t$ argument in the following, since we assume that the secure state reconstruction problem is to be solved at every time instance.

Let $(x^*, E^*)$ denote the actual state of the system and the actual attack vector. Let also $b^* \in \B^{p}$ be a vector of binary indicator variables such that $b^*_i = 0$ when the $i$th sensor is attack-free and $b_i = 1$ otherwise. It follows from~\eqref{eq:output_eqn} that:
\begin{align*}
Y_i = 
\begin{cases}
H_{u,i}(x^*) & \text{if $b^*_i = 0$}\\
H_{u,i}(x^*) + E^*_i & \text{if $b^*_i = 1$}.
\end{cases}
\end{align*}
Therefore, we are interested in a state estimate $x$ and a vector of binary indicator variables $b = (b_1, \ldots, b_p)$ such that the discrepancy between the collected measurements $Y_i$ and the expected outputs $H_{u,i}(x)$ is zero for all the sensors that are labeled as attack-free sensors ($b_i^* = 0$). Furthermore, the estimated state $x$ should be equal to $x^*$. These requests can be formalized as follows.
\begin{problem}{\textbf{(Secure State Reconstruction)}}
For the control system under attack $\Sigma_a$ (defined in~\eqref{eq:system_attack}), construct the estimate \mbox{$\eta = (x, b ) \in \R^n \times \B^p$} such that $\eta \models \phi$ (i.e., $\eta$ satisfies the formula $\phi$),  where:
\begin{align}
\phi ::= & \bigwedge_{i = 1}^p \Bigg( \lnot b_i  \Rightarrow \norm{Y_i - {H}_{u,{i}}( x )}^2 = 0 \Bigg)
 \bigwedge \left( \sum_{i}^p b_i \le \overline{s} \right), 
 \label{eq:phi}
\end{align}
\label{prob:sse}
\end{problem}
subject to $(x^* = x)  \land (\supp(b^*) \subseteq \supp(b)).$

The second clause in the formula $\phi$ rules out the trivial solution in which all sensors are labelled as attacked, by enforcing a cardinality constraint on the number of attacked sensors, which is required to be bounded by $\overline{s}$. 

As in the case of linear systems~\cite{Yasser_SMT}, the secure state reconstruction problem formulation in Problem~\ref{prob:sse} does not ask for a solution with the minimal number of attacked sensors. However, as shown in \cite{Yasser_SMT}, it is possible to obtain the minimal set of sensors under attack by invoking a solver for Problem \ref{prob:sse} multiple times.
In the next section, we characterize the conditions that guarantee the existence of a solution for this problem. 

\section{$s$-Sparse Observability}
\label{sec:obs}

For linear systems, the notion of $s$-sparse observability plays a key role in determining the existence of a solution for Problem~\ref{prob:sse}~\cite{YasserETPGarXiv}. In this section, we generalize this notion to the case  of nonlinear systems. To do so, we consider an attack-free discrete-time nonlinear system of the form:
\begin{align}
\Sigma  \quad
\begin{cases}
	x^{(t+1)} &= f\left(x^{(t)}, u^{(t)}\right),  t \in \N\\
	y^{(t)} &= h\left(x^{(t)}\right)
\end{cases}
\label{eq:system}
\end{align} 
%
and recall the general definitions of indistinguishability and observability.

\begin{definition}[Indistinguishability]
We say that two states $x,x' \in \mathcal{X}$ of system $\Sigma$ are \emph{indistinguishable} from measurements collected from the set of sensors indexed by $\Gamma$ over a window of length $\tau$, and we write $x \ I_{\Gamma}^{\tau} \ x'$ if, for every sequence of controls $u^{(1)}, \hdots, u^{(\tau)} \in
\R^m$ we have $ H_{u,\Gamma} (x) = H_{u,\Gamma}  (x') $.
\end{definition}
\begin{definition}[Observability]
	A state $x \in \mathcal{X}$ of system $\Sigma$ is said to be \emph{observable} using measurements collected from the set of sensors indexed by $\Gamma$ over a window of length $\tau$ \big($(\tau,\Gamma)$-observable for short\big), if, for each $x' \in \mathcal{X}$,  $x \ I_{\Gamma}^{\tau} \ x'$ implies $x = x'$.
\end{definition}
\begin{definition}
	A system $\Sigma$ is said to be \emph{$(\tau,\Gamma)$-observable} if all the states $x \in \mathcal{X}$ are $(\tau,\Gamma)$-observable.
\end{definition}

We can now define the notion of $s$-sparse observability as follows.
\begin{definition}[\textbf{s-Sparse $\mathbf{\tau}$-Observable System}]
The nonlinear control system $\Sigma$, defined in~\eqref{eq:system},
is said to be $s$-sparse $\tau$-observable
if for every set $\Gamma\subseteq\{1,\hdots,p\}$ with $|\Gamma| = s$, the system $\Sigma$
 is $(\tau,\overline{\Gamma})$-observable, where $\overline{\Gamma} = \{1,\ldots,p\}\setminus \Gamma$. 
\end{definition}
In other words, a system is $s$-sparse $\tau$-observable if it is $\tau$-observable from any choice of $(p  - s)$ sensors.

\subsection{$s$-Sparse Observability and Secure State Reconstruction}

Similarly to linear systems, we show that $2\overline{s}$-sparse observability is a necessary and sufficient condition for the existence of a solution of the secure state reconstruction problem. To do so, we start by showing an intermediate result, stating that the correct reconstruction of the system state implies correct estimation of the attack support. To simplify the notation, we also drop the subscript $u$ in all of the proofs.

\begin{proposition} 
For any pair $\eta = (x,b)$ that satisfies $\phi$ in~\eqref{eq:phi} (i.e., such that $\eta \models \phi$), the following holds:
$$ x^* = x \Rightarrow \supp(b^*) \subseteq \supp(b).$$
\label{prop:x_b}
\end{proposition}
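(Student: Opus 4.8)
The plan is to prove the contrapositive at the level of individual sensors. Unwinding the definitions, the containment $\supp(b^*)\subseteq\supp(b)$ is equivalent to asserting that, for every index $i\in\{1,\ldots,p\}$, $b_i=0$ forces $b^*_i=0$. So I would fix an arbitrary sensor index $i$ with $b_i=0$ and aim to conclude that sensor $i$ is in fact attack-free, i.e. that $E^*_i=0$, which is exactly $b^*_i=0$ by the definition of the indicator vector.

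The first step is to extract what $\eta\models\phi$ says about this index. Since $b_i=0$, the literal $\lnot b_i$ holds, so the $i$th conjunct of $\phi$ in~\eqref{eq:phi} is triggered and yields $\norm{Y_i-H_{u,i}(x)}^2=0$, hence $Y_i=H_{u,i}(x)$. I would remark here that the cardinality clause $\sum_i b_i\le\overline{s}$ plays no role in this argument; it is needed only later, to exclude the trivial labeling in which every sensor is declared attacked.

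The second step combines the antecedent $x=x^*$ with the measurement model. Substituting $x=x^*$ into the identity just obtained gives $Y_i=H_{u,i}(x^*)$. On the other hand, by~\eqref{eq:output_eqn} the measured block satisfies $Y_i=H_{u,i}(x^*)+E^*_i$, an identity that holds uniformly for every block, with $E^*_i=0$ precisely when sensor $i$ is attack-free. Equating the two expressions for $Y_i$ cancels the common term $H_{u,i}(x^*)$ and leaves $E^*_i=0$, so $b^*_i=0$ and $i\notin\supp(b^*)$. As $i$ was an arbitrary index outside $\supp(b)$, this establishes $\supp(b^*)\subseteq\supp(b)$.

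I do not expect a genuine obstacle in this argument: it follows directly from the semantics of $\phi$ together with the output model, and invokes no observability hypothesis. The only point deserving care is bookkeeping—keeping straight that $b^*_i$ is the indicator of $E^*_i\neq 0$, and that the relation $Y_i=H_{u,i}(x^*)+E^*_i$ is valid for all blocks rather than only the attacked ones—so that the cancellation leading to $E^*_i=0$ is justified without a case split on the value of $b^*_i$.
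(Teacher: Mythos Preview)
Your argument is correct and is essentially the same as the paper's: both use the implication clauses of $\phi$ to obtain $Y_i=H_{u,i}(x)$ for every $i\notin\supp(b)$, substitute the measurement model $Y_i=H_{u,i}(x^*)+E^*_i$, and cancel using $x=x^*$ to force $E^*_i=0$. The only cosmetic difference is that the paper aggregates these indices into the block $\overline{\supp(b)}$ and writes the chain of implications in vector form, whereas you run the same computation one sensor at a time.
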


\begin{proof}
The result follows from the fact that the first set of clauses in $\phi$ enforces:
\begin{align*}
 \bigwedge_{i \in \overline{\supp(b)}} \norm{Y_i - {H}_{i}( x )}^2 = 0 
  &\Rightarrow \norm{Y_{\overline{\supp(b)}} - {H}_{\overline{\supp(b)}}(x)}^2 = 0 \\
  &\Rightarrow \norm{{H}_{\overline{\supp(b)}}(x^*) + E^*_{\overline{\supp(b)}} - {H}_{\overline{\supp(b)}}(x)}^2 = 0 \\
  &\Rightarrow \norm{E^*_{\overline{\supp(b)}}}^2 = 0
 \end{align*}
 where the last implication follows from $x$ and $x^*$ being equal. The last equality implies that the attack vector $E^*$, after removing the sensors indexed by the estimate set $\supp(b)$, is equal to zero; hence, we conclude that \mbox{$\supp(b^*) \subseteq \supp(b)$}.
%
\end{proof}

Now we can state the main result of this section as follows.

\begin{theorem} 
For any pair $\eta = (x,b)$ that satisfy $\phi$ in~\eqref{eq:phi} the following holds:
$$ x^* = x \qquad \land \qquad \supp(b^*) \subseteq \supp(b)$$
if and only
if the dynamical system $\Sigma_a$ defined by \eqref{eq:system_attack} is $2\overline{s}$-sparse $\tau$-observable.
\label{th:sse}	
\end{theorem}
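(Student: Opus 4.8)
The plan is to prove the biconditional by its two directions, leaning on Proposition~\ref{prop:x_b} to reduce the work. The observation is that once we establish $x = x^*$, the second conjunct $\supp(b^*) \subseteq \supp(b)$ follows immediately from Proposition~\ref{prop:x_b}. Hence the entire content of the sufficiency direction ($\Leftarrow$) is to show $x = x^*$, and the necessity direction ($\Rightarrow$), argued by contrapositive, amounts to exhibiting a pair $\eta \models \phi$ for which $x \neq x^*$.

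For sufficiency, I would assume $\Sigma_a$ is $2\overline{s}$-sparse $\tau$-observable and fix any $\eta = (x,b) \models \phi$. The key is a counting argument on sensor indices. The cardinality clause of $\phi$ gives $|\supp(b)| \le \overline{s}$, while the attack model gives $|\supp(b^*)| \le \overline{s}$, so the union $\Delta := \supp(b) \cup \supp(b^*)$ satisfies $|\Delta| \le 2\overline{s}$. For every index $i \in \overline{\Delta} = \overline{\supp(b)} \cap \overline{\supp(b^*)}$, the satisfied clauses of $\phi$ give $H_i(x) = Y_i$, while such sensors being attack-free gives $Y_i = H_i(x^*)$; hence $H_{\overline{\Delta}}(x) = H_{\overline{\Delta}}(x^*)$ for the control sequence in the window, i.e., $x\, I_{\overline{\Delta}}^{\tau}\, x^*$. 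Enlarging $\Delta$ to a set $\Delta'$ of size exactly $2\overline{s}$ only shrinks the complement, so $\overline{\Delta'} \subseteq \overline{\Delta}$ and therefore $x\, I_{\overline{\Delta'}}^{\tau}\, x^*$ as well. Since $|\Delta'| = 2\overline{s}$, the $2\overline{s}$-sparse $\tau$-observability hypothesis makes $\Sigma$ be $(\tau, \overline{\Delta'})$-observable, forcing $x = x^*$.

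For necessity I would argue by contrapositive: assuming $\Sigma_a$ is not $2\overline{s}$-sparse $\tau$-observable, I construct a pair $\eta \models \phi$ that violates the conclusion. Non-observability yields a set $\Delta$ with $|\Delta| = 2\overline{s}$ and two distinct states $z \neq z'$ that are indistinguishable from $\overline{\Delta}$, i.e., $H_{\overline{\Delta}}(z) = H_{\overline{\Delta}}(z')$. Partition $\Delta = \Delta_1 \cup \Delta_2$ into disjoint halves of size $\overline{s}$ each, and set up the actual scenario with true state $x^* = z$ and attack supported on $\Delta_1$ chosen so that $E^*_i = H_i(z') - H_i(z)$ for $i \in \Delta_1$; this makes $Y_i = H_i(z')$ on $\Delta_1$ and $Y_i = H_i(z)$ elsewhere. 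The candidate estimate $x = z'$ with $\supp(b) = \Delta_2$ then satisfies $\phi$: the cardinality clause holds because $|\supp(b)| = |\Delta_2| = \overline{s}$ meets the bound, and on $\overline{\supp(b)} = \overline{\Delta} \cup \Delta_1$ the residual vanishes, since $Y_i = H_i(z')$ holds on $\Delta_1$ by construction and on $\overline{\Delta}$ by indistinguishability. Yet $x = z' \neq z = x^*$, so the conclusion fails.

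The main obstacle I anticipate is getting the index bookkeeping exactly right: verifying $\overline{\supp(b)} \cap \overline{\supp(b^*)} = \overline{\supp(b) \cup \supp(b^*)}$, confirming that the residual-free set $\overline{\supp(b)}$ decomposes as $\overline{\Delta} \cup \Delta_1$ in the necessity construction, and justifying the monotonicity step that indistinguishability from $\overline{\Delta}$ implies indistinguishability from any subset $\overline{\Delta'}$. The remaining delicate point is ensuring the counterexample is admissible under the attack model, which holds precisely because no assumption (magnitude, statistics, or time evolution) is placed on $E^*$, so the choice $E^*_i = H_i(z') - H_i(z)$ is legitimate.
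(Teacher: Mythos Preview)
Your proposal is correct and follows essentially the same approach as the paper: in both directions you exploit that $|\supp(b)\cup\supp(b^*)|\le 2\overline{s}$ to reduce to (non-)observability on the complement, and in the converse you split the unobservable set into two halves, assigning one to the true attack support and the other to the estimate's support with the attack vector chosen as $H_i(z')-H_i(z)$. The only cosmetic differences are that you argue sufficiency directly rather than by contradiction and that you make the enlargement of $\Delta$ to size exactly $2\overline{s}$ explicit, which the paper leaves implicit.
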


\begin{proof}
We need to show that $2\overline{s}$-sparse $\tau$-observability is a necessary and sufficient condition for $x = x^*$ to hold. Once this is established, $\supp(b^*) \subseteq \supp(b)$ follows from Proposition \ref{prop:x_b}.

We assume for the sake of contradiction that there exists a pair $\eta = (x,b)$ that satisfies $\phi$, and such that $x \ne x^*$, while the system $\Sigma_a$ is $2\overline{s}$-sparse $\tau$-observable. Then, 
the first set of clauses in $\phi$ implies that:
\begin{align}
 \bigwedge_{i \in \overline{\supp(b)}} \norm{Y_i - {H}_{i}( x )}^2 = 0 
 & \Rightarrow \norm{Y_{\overline{\supp(b)}} - {H}_{\overline{\supp(b)}}(x)}^2 = 0  \nonumber \\
 & \Rightarrow  \norm{Y_{\overline{\supp(b) \cup \supp(b^*)}} - {H}_{\overline{\supp(b) \cup \supp(b^*)}}(x)}^2 = 0 \nonumber \\
 & \stackrel{(a)}{\Rightarrow}  \norm{{H}_{\overline{\supp(b) \cup \supp(b^*)}}(x^*) - {H}_{\overline{\supp(b) \cup \supp(b^*)}}(x)}^2 = 0 \nonumber  \\
& \Rightarrow  {H}_{\overline{\supp(b) \cup \supp(b^*)}}(x^*) = {H}_{\overline{\supp(b) \cup \supp(b^*)}}(x) 
\label{eq:proof_equality}
\end{align}
where the implication $(a)$ follows by using the equality~\eqref{eq:output_eqn}, along with the fact that $E^*_{\overline{\supp(b^*)}}$ being zero implies that $E^*_{\overline{\supp(b) \cup \supp(b^*)}}$ is also equal to zero.

However, by the second clause in $\phi$ we know that the cardinality of ${\supp(b)}$ is at most $\overline{s}$, hence the cardinality of $\vert {\supp(b) \cup \supp(b^*)} \vert$ is at most $ 2\overline{s}$. Equality~\eqref{eq:proof_equality} would then imply that $x$ and $x^*$ are indistinguishable using $p - 2 \overline{s}$ sensors, which in turn implies that the system $\Sigma_a$ is not $2 \overline{s}$-sparse $\tau$-observable, a contradiction with respect to our original assumption.

Conversely, if the system $\Sigma_a$ is not $2s$-sparse $\tau$-observable then there exists a vector $x$  with $x \ne x^*$ which is indistinguishable from  $x^*$, when using $p - 2\overline{s}$ sensors. This in turn implies that 
${H}_{\Gamma}(x^*)  ={H}_{\Gamma}(x)$ for some $\Gamma$ that have cardinality $p - 2\overline{s}$. Hence, we can define two indicator variables $b$ and $b^*$ such that $\Gamma=\overline{\supp(b)\cup\supp(b^*)}$, $\vert\supp(b)\vert = \overline{s}$ and $\vert \supp(b^*)\vert = \overline{s}$. We can also define the attack vector $E^*$ as :
\begin{align*} 
E^*_{\supp(b^*)} &= {H}_{\supp(b^*)}(x)  - {H}_{\supp(b^*)}(x^*). 
\end{align*}
Then, using the above definitions, together with the assumption ${H}_{\Gamma}(x^*)  ={H}_{\Gamma}(x)$, we obtain:
\begin{align*}
\norm{Y_{\overline{\supp(b)}} -  {H}_{\overline{\supp(b)}}(x)  }^2 
&\quad = \norm{{H}_{\overline{\supp(b)}}(x^*) +  E^*_{\overline{\supp(b)}} -  {H}_{\overline{\supp(b)}}(x)  }^2 \\
&\quad = \norm{\matrix{{H}_{\Gamma}(x^*) \\ {H}_{\supp(b^*)}(x^*)} + \matrix{0 \\ E^*_{\supp(b^*)}} - \matrix{{H}_{\Gamma}(x) \\ {H}_{\supp(b^*)}(x)}}^2 \\
&\quad = 0
\end{align*}
which implies that there exists an estimate $\eta=(x,b)$, with $x \ne x^*$ that also satisfies $\phi$.
\end{proof}

\subsection{Differential Flatness and $s$-Sparse Flatness}
\label{sec:flatness}
For the rest of this paper, we consider a special class of nonlinear systems known as \emph{differentially flat}\footnote{Although the term  \emph{difference flatness} is sometimes used in the literature for systems governed by difference equations, we choose to employ the widely accepted term \emph{differential flatness}.} systems. A system is differentially flat if the state and the input can be reconstructed from current and previous outputs. More formally,

\begin{definition}[Differential Flatness]
System is differentially flat if
there exist 
an integer $k \in \N$,  and functions $\alpha$ and $\beta$, such that the state and the input can be reconstructed from the outputs $y^{(t)}$ as follows:
\begin{align}
x^{(t)} &= \alpha \left(y^{(t)}, y^{(t-1)}, \ldots, y^{(t-k+1)} \right) \label{eq:flatObserver}\\
u^{(t)} &= \beta \left(y^{(t)}, y^{(t-1)}, \ldots, y^{(t-k+1)} \right) .
\end{align}
In such case, the output $y^{(t)}$ is called a flat output.
\end{definition}
In the remainder of this paper, we assume that the window length $\tau$ in~\eqref{eq:output_eqn} is chosen such that $\tau = k$.

\begin{definition}[\textbf{s-Sparse Flat System}]
The nonlinear control system $\Sigma_a$, defined by~\eqref{eq:system_attack},
is said to be $s$-sparse flat
if for every set $\Gamma\subseteq\{1,\hdots,p\}$ with $|\Gamma| = s$, the system $\Sigma_{\overline{\Gamma}} $:
\begin{align}
\Sigma_{\overline{\Gamma}}   \quad
\begin{cases}
	x^{(t+1)} &= f\left(x^{(t)}, u^{(t)}\right), \\
	y^{(t)} &= h_{\overline{\Gamma}} \left(x^{(t)}\right)
\end{cases}
\end{align} 
is differentially flat.
\end{definition}
In other words, the system is $s$-sparse flat if any choice of $p  - s$ sensors is  a flat output. It is then straightforward to show that $s$-sparse flatness implies  $s$-sparse $\tau$-observability.

\section{Secure State Reconstruction Using SMT Solving}
\label{sec:theory}
The secure state reconstruction problem is combinatorial, since a direct solution would require constructing the state from all different combinations of $p - \overline{s}$ sensors to determine which sensors are under attack. In this section, we show how using SMT solving can dramatically reduce the complexity of the reconstruction algorithm.

To decide whether the combination of Boolean and nonlinear constraints in~\eqref{eq:phi} is satisfiable, we develop the detection algorithm \textsc{Imhotep-SMT} using the \emph{lazy} SMT paradigm~\cite{smtbook}. By building upon the \textsc{Imhotep-SMT} solver~\cite{Yasser_SMT,Imhotep_SMT_CAV}, our 
decision procedure combines a SAT solver (\textsc{SAT-Solve}) and a theory solver ($\mathcal{T}$-\textsc{Solve}). 
However, differently than~\cite{Yasser_SMT, Imhotep_SMT_CAV}, the theory solver in this paper can also reason about the nonlinear constraints in~\eqref{eq:phi}, as generated from a differentially flat system.
The SAT solver efficiently reasons about combinations of Boolean and pseudo-Boolean constraints\footnote{A pseudo-Boolean constraint is a linear constraint over Boolean variables with integer coefficients.}, using the 
David-Putnam-Logemann-Loveland (DPLL) algorithm~\cite{lazySMT} to suggest possible assignments for the nonlinear constraints. The theory solver checks the consistency of the given assignments, and provides the reason for the conflict, a \emph{certificate}, or a counterexample, whenever inconsistencies are found. Each certificate results in learning new constraints which will be used by the SAT solver to prune the search space. The complex decision task is thus broken into two simpler tasks, respectively, over the Boolean and nonlinear domains. 


\subsection{Overall Architecture}
As illustrated in Algorithm~\ref{alg:smt},  we start by mapping each nonlinear constraint to an auxiliary Boolean variable $c_i$ to obtain the following (pseudo-)Boolean satisfiability problem:
%
\begin{align*}
\phi_B :=  \bigwedge_{i = 1}^p  \Bigg(\lnot b_i  \Rightarrow c_i \Bigg) \bigwedge \left( \sum_{i = 1}^p b_i \le \overline{s} \right)
\end{align*}
where $c_i = 1$ if $\norm{Y_i - {H}_{u,{i}} (x) } = 0$ is satisfied, and zero otherwise. By only relying on the Boolean structure of  the problem, \textsc{SAT-Solve} returns an assignment for the variables $b_i$ and $c_i$ (for $i = 1, \hdots, p$), thus hypothesizing which sensors are attack-free, hence which nonlinear constraints should be jointly satisfied.  
This Boolean assignment is then used by $\mathcal{T}$-\textsc{Solve} to determine whether there exists a state $x \in \R^n$ which satisfies all the nonlinear constraints related to the unattacked sensors, i.e.~$\{ \norm{Y_i - {H}_{u,{i}} ({x}) } = 0 | i \in \overline{\supp(b)} \}$ is the set of constraints sent to $\mathcal{T}$-\textsc{Solve}. If $x$ is found, \textsc{Imhotep-SMT} terminates with $\verb+SAT+$ and provides the solution $(x,b)$. Otherwise, the $\verb+UNSAT+$ certificate $\phi_{\text{cert}}$ is generated in terms of new Boolean constraints, explaining which sensor measurements are conflicting and may be under attack. A na\"{i}ve certificate can always be generated in the form of:
\begin{align} \label{eq:trivcert} 
 \phi_{\text{triv-cert}} = \sum_{i \in \overline{\supp(b)}} b_i \ge 1,
\end{align}
which encodes the fact that at least one of the sensors in the set $\overline{\supp(b)}$ (i.e. for which $b_i=0$ in the current iteration) is actually under attack, and must be set to one in the next assignment of the SAT solver. The augmented Boolean problem is then fed back to \textsc{SAT-Solve} to produce a new assignment, and the sequence of new SAT queries repeats until $\mathcal{T}$-\textsc{Solve} terminates with $\verb+SAT+$.

By assuming that the system is $2\overline{s}$-sparse flat, it follows from Theorem \ref{th:sse} that there always exists a solution to Problem~\ref{prob:sse}, hence Algorithm~\ref{alg:smt} will always terminate. However, to help the SAT solver quickly converge towards the correct assignment, a central problem in lazy SMT solving is to generate succinct explanations whenever conjunctions of nonlinear constraints are unfeasible. 

The rest of this section will then focus on the implementation of the two main tasks of $\mathcal{T}$-\textsc{Solve}, namely, (i) checking the satisfiability of a given assignment ($\mathcal{T}$-\textsc{Solve.Check}), and (ii) generating succinct UNSAT certificates ($\mathcal{T}$-\textsc{Solve.Certificate}). 


\begin{algorithm}[t]
\caption{\textsc{Imhotep-SMT}}
\begin{algorithmic}[1]
\STATE status := \verb+UNSAT+;
\STATE $\phi_B :=  \bigwedge_{i = 1}^p  \left(\lnot b_i  \Rightarrow c_i \right) \bigwedge \left( \sum_{i = 1}^p b_i \le \overline{s} \right)
$;
\WHILE{status == \texttt{UNSAT}}
	\STATE $(b,c) :=$ \textsc{SAT-Solve}$( \phi_B )$;
	\STATE (status, $x$) := $\mathcal{T}$-\textsc{Solve.Check}$(\overline{\supp}(b))$;
	\IF {status == \texttt{UNSAT}}
		\STATE $\phi_{\text{cert}}$ := $\mathcal{T}$-\textsc{Solve.Certificate}$(b, x)$;
		\STATE $\phi_B := \phi_B \land \phi_{\text{cert}}$;
	\ENDIF
\ENDWHILE
\STATE \textbf{return} $\eta = (x,b)$;
\end{algorithmic}
\label{alg:smt}
\end{algorithm}

\subsection{Satisfiability Checking}
It follows from the $2\overline{s}$-sparse flatness property discussed in Section~\ref{sec:problem}, that  for a given assignment of the Boolean variables $b$, with $\vert\supp(b)\vert \le \overline{s}$, 
the remaining $p-\overline{s}$ sensors define a flat output as:
$$ y^{(t)}_{\mathcal{I}}, \quad \ldots, \quad y^{(t-\tau+1)}_{\mathcal{I}}$$
where $\mathcal{I} = \overline{\supp(b)}$.
The next step is to use the flat output in order to calculate the estimate \mbox{$x = \alpha\left(y_\mathcal{I}^{(t)}, \ldots, y_\mathcal{I}^{(t-\tau+1)}\right)$}. Finally, we evaluate if the condition:
\begin{equation}\label{eq:lmq}
\norm{Y_{\overline{\supp(b)}} - {H}_{u,{\overline{\supp(b)}}} (x)}^2 = 0
\end{equation}
is satisfied. This procedure is summarized in Algorithm~\ref{alg:check}.

\begin{algorithm}[ht]
\caption{$\mathcal{T}$\textsc{-Solve.Check}$(\mathcal{I})$}
\begin{algorithmic}[1]
\STATE \textbf{Construct the state estimate:}\\ $\qquad x:= \alpha\left(y_\mathcal{I}^{(t)}, \ldots, y_\mathcal{I}^{(t-\tau+1)}\right)$ 
\IF{ $\norm{Y_{\mathcal{I}} - {H}_{u,{\mathcal{I}}(x)}} == 0$} \label{line:check}
	\STATE status := \texttt{SAT};
\ELSE
	\STATE status := \texttt{UNSAT};
\ENDIF
\STATE \textbf{return} (status, $x$)
\end{algorithmic}
\label{alg:check}
\end{algorithm}

\subsection{Generating Succinct UNSAT Certificates}

Whenever $\mathcal{T}$\textsc{-Solve.Check} provides \texttt{UNSAT},  the na\"{i}ve certificate can always be generated as in~\eqref{eq:trivcert}.
%
%
However, such trivial certificate does not provide much information, since it only excludes the current assignment from the search space, and can lead to exponential execution time, as reflected by the following proposition.

\begin{proposition}
Let the linear dynamical system $\Sigma_a$ defined in~\eqref{eq:system_attack} be $2\overline{s}$-sparse observable. Then, Algorithm~\ref{alg:smt} which uses the trivial UNSAT certificate $\phi_{\text{triv-cert}}$ in~\eqref{eq:trivcert} returns \mbox{$\eta = (x,b)$} such that:
$$ x^* - x \quad \land \quad \supp(b^*) \subseteq \supp(b),$$
where $x^*$ and $b^*$ are the actual system state and attack indicator vector, as defined in Section~\ref{sec:pform}.  
Moreover, the upper bound on the number of iterations of Algorithm~\ref{alg:smt} is $ \sum_{s = 0}^{\overline{s}} \binom{p}{s}.$
\label{prop:trivial-cert}
\end{proposition}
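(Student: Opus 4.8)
The plan is to split the argument into a correctness claim (the returned pair $(x,b)$ satisfies $x^* = x$ and $\supp(b^*)\subseteq\supp(b)$) and a termination-with-bound claim (the while-loop runs at most $\sum_{s=0}^{\overline{s}}\binom{p}{s}$ times). For correctness I would first observe that Algorithm~\ref{alg:smt} exits the loop only when $\mathcal{T}$-\textsc{Solve.Check} returns \texttt{SAT}, which by Algorithm~\ref{alg:check} means a state $x$ has been found with $\norm{Y_{\overline{\supp(b)}} - {H}_{\overline{\supp(b)}}(x)}^2 = 0$. Since \textsc{SAT-Solve} only ever proposes assignments obeying the cardinality clause $\sum_i b_i \le \overline{s}$, the returned pair $(x,b)$ satisfies the formula $\phi$ in~\eqref{eq:phi}. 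Invoking Theorem~\ref{th:sse} under the hypothesis that $\Sigma_a$ is $2\overline{s}$-sparse observable then immediately yields $x^* = x$ and $\supp(b^*)\subseteq\supp(b)$, so no new dynamics-level argument is needed here.

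For termination and the iteration bound, the key is to track the finite pool of Boolean assignments that \textsc{SAT-Solve} may propose, namely those $b\in\B^p$ with $|\supp(b)|\le\overline{s}$, of which there are exactly $\sum_{s=0}^{\overline{s}}\binom{p}{s}$. I would establish two facts. First (progress): whenever $\mathcal{T}$-\textsc{Solve.Check} returns \texttt{UNSAT} for the current $b$, the added certificate $\phi_{\text{triv-cert}}=\sum_{i\in\overline{\supp(b)}}b_i\ge 1$ is violated by $b$ itself, since all its indices in $\overline{\supp(b)}$ are zero; hence $\phi_B\wedge\phi_{\text{cert}}$ can never again be satisfied by this $b$, and each \texttt{UNSAT} iteration permanently removes at least the current assignment from the pool. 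Second (soundness): no certificate ever removes a correct assignment. Indeed, if $b$ satisfies $\supp(b^*)\subseteq\supp(b)$ then $\overline{\supp(b)}$ contains no truly attacked sensor, so $x^*$ makes the residual in~\eqref{eq:lmq} vanish and $\mathcal{T}$-\textsc{Solve.Check} returns \texttt{SAT} on $b$; conversely, any \texttt{UNSAT} assignment $b'$ must have a truly attacked sensor $j\in\overline{\supp(b')}$, and every correct assignment $b$ has $b_j=1$, so it satisfies $\sum_{i\in\overline{\supp(b')}}b_i\ge 1$.

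Combining the two facts closes the argument. Under $2\overline{s}$-sparse observability, Theorem~\ref{th:sse} in fact makes the \texttt{SAT}/\texttt{UNSAT} dichotomy on the pool coincide with the correct/incorrect dichotomy: every \texttt{SAT}-returning assignment automatically satisfies $\supp(b^*)\subseteq\supp(b)$. The pool is finite of size $\sum_{s=0}^{\overline{s}}\binom{p}{s}$, it strictly shrinks on every \texttt{UNSAT} iteration, and (by soundness) it always retains at least one correct assignment, which exists by Theorem~\ref{th:sse}. Since \textsc{SAT-Solve} is complete over the pool, after at most $\sum_{s=0}^{\overline{s}}\binom{p}{s}-1$ \texttt{UNSAT} iterations every incorrect assignment has been eliminated, forcing a correct one to be proposed and the loop to terminate \texttt{SAT} on the following pass; this yields the stated bound of $\sum_{s=0}^{\overline{s}}\binom{p}{s}$ total iterations.

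The step I expect to be the main obstacle is the soundness half of the certificate argument, i.e.\ proving that the trivial certificate never discards a correct assignment. This hinges on the exactness of $\mathcal{T}$-\textsc{Solve.Check}: an \texttt{UNSAT} verdict must genuinely certify that the proposed attack-free set $\overline{\supp(b)}$ contains a compromised sensor, rather than merely reflecting a failure to locate a consistent $x$. In the linear setting of this proposition that exactness reduces to solvability of the associated linear least-squares/observability condition, so once it is in place, the cardinality accounting for the $\sum_{s=0}^{\overline{s}}\binom{p}{s}$ bound and the reduction of correctness to Theorem~\ref{th:sse} are routine bookkeeping.
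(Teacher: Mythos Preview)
Your proposal is correct and follows essentially the same approach as the paper: correctness is reduced to Theorem~\ref{th:sse} once one observes that the returned $(x,b)$ satisfies $\phi$, and the iteration bound comes from counting the $\sum_{s=0}^{\overline{s}}\binom{p}{s}$ admissible Boolean assignments, each of which the trivial certificate can exclude at most once. Your treatment is in fact more thorough than the paper's two-sentence sketch, which does not spell out the progress/soundness decomposition you give; in particular, your explicit verification that the trivial certificate never eliminates an assignment $b$ with $\supp(b^*)\subseteq\supp(b)$ (so that \textsc{SAT-Solve} cannot exhaust the pool before reaching a correct assignment) fills a gap the paper leaves implicit.
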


\begin{proof}
Correctness of Algorithm~\ref{alg:smt} follows directly from the $2\overline{s}$-sparse flatness along with Theorem \ref{th:sse}. The worst case bound on the number of iterations would happen when the solver exhaustively explores all possible combinations of attacked sensors with cardinality less than or equal to $\overline{s}$ in order to find the correct assignment. This is equal to $ \sum_{s = 0}^{\overline{s}} \binom{p}{s}$ iterations.
\end{proof}

%


The generated UNSAT certificate heavily affects the overall execution time of Algorithm~\ref{alg:smt}: the  smaller the certificate, the more information is learnt and the faster is the convergence of the SAT solver to the correct assignment. For example,  a certificate with $b_i = 1$ would identify exactly one attacked sensor at each step. 
Therefore, our objective is to design an algorithm that can lead to more \emph{compact certificates} to enhance the execution time of \textsc{Imhotep-SMT}. To do so, we exploit the specific structure of the secure state reconstruction problem and generate customized, yet stronger, UNSAT certificates. 

First, we observe that the measurements of each sensor \mbox{$Y_i = {H}_{u,i} (x)$} define a set $\mathbb{H}_i \subseteq \mathcal{X} $ as:
$$ \mathbb{H}_i  = \{ x \in \mathcal{X} \; \vert  \; Y_i - {H}_{u,i} (x) = 0\}.$$
It is then straightforward to show the following result.
\begin{proposition}
Let the nonlinear dynamical system $\Sigma_a$ defined in~\eqref{eq:system_attack} be $2\overline{s}$-sparse flat.  
Then, for any set of indices $\mathcal{I} \subseteq \{1, \hdots, p\}$,
the following statements are equivalent:
\begin{itemize}
\item $\mathcal{T}$\textsc{-Solve.Check}$(\mathcal{I})$ returns \texttt{UNSAT},
\item $\bigcap_{i \in \mathcal{I}} \mathbb{H}_i = \emptyset$.
\end{itemize}
\label{prop:geom}
\end{proposition}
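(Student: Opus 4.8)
The plan is to pass through the consistency sets $\mathbb{H}_i$ and reduce everything to a statement about the single state $x$ produced inside Algorithm~\ref{alg:check}. First I would record the elementary algebraic fact that, for any $x \in \mathcal{X}$, the stacked residual vanishes exactly when every block vanishes: $\norm{Y_{\mathcal{I}} - {H}_{u,\mathcal{I}}(x)}^2 = 0$ if and only if $Y_i - {H}_{u,i}(x) = 0$ for every $i \in \mathcal{I}$, which by definition of $\mathbb{H}_i$ is the same as $x \in \bigcap_{i\in\mathcal{I}} \mathbb{H}_i$. Thus the guard on line~\ref{line:check} of Algorithm~\ref{alg:check} is precisely a membership test of the reconstructed state in the intersection, and the whole proposition follows once I pin down which $x$ the reconstruction returns.

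For the direction ``\texttt{SAT} $\Rightarrow \bigcap_{i\in\mathcal{I}}\mathbb{H}_i \ne \emptyset$'' (equivalently, the contrapositive $\bigcap = \emptyset \Rightarrow$ \texttt{UNSAT}) there is nothing to do beyond the above observation: if $\mathcal{T}$\textsc{-Solve.Check}$(\mathcal{I})$ returns \texttt{SAT}, then the state $x = \alpha(y_\mathcal{I}^{(t)},\ldots,y_\mathcal{I}^{(t-\tau+1)})$ it computed satisfies the guard, hence lies in $\bigcap_{i\in\mathcal{I}}\mathbb{H}_i$, which is therefore nonempty.

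The substance is the converse, ``$\bigcap_{i\in\mathcal{I}}\mathbb{H}_i \ne \emptyset \Rightarrow$ \texttt{SAT},'' and here I would invoke differential flatness. In the relevant regime the call occurs with $\mathcal{I} = \overline{\supp(b)}$ and $|\supp(b)| \le \overline{s}$, so $|\mathcal{I}| \ge p - \overline{s} \ge p - 2\overline{s}$; hence $\mathcal{I}$ contains a set of $p - 2\overline{s}$ sensors, and by $2\overline{s}$-sparse flatness that subset---and therefore $\mathcal{I}$ itself, since a superset of a flat output is again a flat output---is a flat output, with $\alpha$ the associated reconstruction map that left-inverts $H_{u,\mathcal{I}}$ on genuine output windows. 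Now pick any $\bar x \in \bigcap_{i\in\mathcal{I}}\mathbb{H}_i$; then $Y_{\mathcal{I}} = H_{u,\mathcal{I}}(\bar x)$, so the window $y_\mathcal{I}^{(t)},\ldots,y_\mathcal{I}^{(t-\tau+1)}$ is exactly the genuine output window generated by $\bar x$ under the applied controls. Feeding it to $\alpha$ thus returns $x = \bar x$, whence $H_{u,\mathcal{I}}(x) = Y_{\mathcal{I}}$, the guard on line~\ref{line:check} is met, and the check reports \texttt{SAT}.

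I expect the only delicate point to be the justification that the computed $x$ coincides with the witness $\bar x$, which rests on $\alpha$ being a genuine left inverse of the output map rather than an arbitrary function; this is exactly the content of differential flatness, together with the remark already recorded in Section~\ref{sec:flatness} that $s$-sparse flatness implies $s$-sparse $\tau$-observability, so the state consistent with $\mathcal{I}$ is in fact unique. Once this is in hand, the two displayed implications combine into the claimed equivalence. I would also observe in passing that injectivity of $H_{u,\mathcal{I}}$ forces $\bigcap_{i\in\mathcal{I}}\mathbb{H}_i$ to be either empty or a singleton, a convenient sanity check that is not strictly needed for the argument.
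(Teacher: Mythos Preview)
The paper does not actually supply a proof of this proposition: it is introduced with the sentence ``It is then straightforward to show the following result'' and left without argument. Your write-up is a correct way to fill in the details the paper omits, and the two-step plan (reduce the guard on line~\ref{line:check} to membership in $\bigcap_{i\in\mathcal{I}}\mathbb{H}_i$, then use flatness to force the reconstructed $x$ to coincide with any witness $\bar x$ in that intersection) is exactly the intended mechanism.

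One remark on scope: the proposition is stated for \emph{any} $\mathcal{I}\subseteq\{1,\ldots,p\}$, but your converse direction (and indeed Algorithm~\ref{alg:check} itself, which invokes $\alpha$) only makes sense once $y_{\mathcal{I}}$ is a flat output, i.e.\ once $|\mathcal{I}|\ge p-2\overline{s}$. You flag this by restricting to ``the relevant regime,'' which is fine since every call in the paper (from Algorithm~\ref{alg:smt} with $|\mathcal{I}|\ge p-\overline{s}$, and from Algorithm~\ref{alg:certificate1} with $|\mathcal{I}_{temp}|\ge p-2\overline{s}$) satisfies that bound; but it would be cleaner to state this hypothesis explicitly rather than leave it implicit, since as literally written the proposition over-claims. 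Your closing observation that $\bigcap_{i\in\mathcal{I}}\mathbb{H}_i$ is either empty or a singleton is precisely what the paper exploits in the proof of Lemma~\ref{prop:minSet}, so it is worth keeping.
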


The existence of a compact Boolean constraint that explains a conflict is then guaranteed by the following Lemma.

\begin{lemma}
Let the nonlinear dynamical system $\Sigma_a$ defined in~\eqref{eq:system_attack} be $2\overline{s}$-sparse flat. 
If $\mathcal{T}$\textsc{-Solve.Check}$(\mathcal{I})$ is \texttt{UNSAT} for a set $\mathcal{I}$, with $\vert \mathcal{I} \vert > p - 2\overline{s}$,
then there exists a subset $\mathcal{I}_{temp} \subset \mathcal{I}$ with  $\vert \mathcal{I}_{temp} \vert \le p - 2\overline{s} + 1$ such that $\mathcal{T}$\textsc{-Solve.Check}$(\mathcal{I}_{temp})$ is also \texttt{UNSAT}.
\label{prop:minSet}
\end{lemma}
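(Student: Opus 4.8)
The plan is to exploit the geometric reformulation provided by Proposition~\ref{prop:geom}. By that result, $\mathcal{T}$\textsc{-Solve.Check}$(\mathcal{I})$ being \texttt{UNSAT} is equivalent to $\bigcap_{i \in \mathcal{I}} \mathbb{H}_i = \emptyset$. So the statement of the lemma reduces to a purely set-theoretic claim about the sets $\mathbb{H}_i$: if the intersection over all of $\mathcal{I}$ (with $|\mathcal{I}| > p - 2\overline{s}$) is empty, then some subset $\mathcal{I}_{temp}$ of size at most $p - 2\overline{s} + 1$ already has empty intersection. First I would restate the goal in this language, so that the combinatorial structure of the problem becomes visible and the dynamics drop out of the argument entirely.

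The key observation I would establish is a lower bound on the size of any subcollection of the $\mathbb{H}_i$ that has a common point. Concretely, I would argue that whenever a set $\mathcal{J} \subseteq \{1,\ldots,p\}$ satisfies $|\mathcal{J}| \ge p - 2\overline{s}$, the sets $\{\mathbb{H}_i\}_{i \in \mathcal{J}}$ must have a nonempty common intersection containing $x^*$ — or, more carefully, that $2\overline{s}$-sparse flatness forces any nonempty intersection to pin down a unique state. The link to observability is this: if $|\mathcal{I}_{temp}| \ge p - 2\overline{s}$, then the complement has size at most $2\overline{s}$, and $2\overline{s}$-sparse flatness (which implies $2\overline{s}$-sparse $\tau$-observability) guarantees that the sensors in $\mathcal{I}_{temp}$ alone uniquely determine the state. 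Hence $\bigcap_{i \in \mathcal{I}_{temp}} \mathbb{H}_i$ can contain \emph{at most one} point. The argument then becomes a counting/uniqueness argument: I would show that if every subset of size exactly $p - 2\overline{s} + 1$ had nonempty intersection, each such intersection would be forced (by the at-most-one-point property) to be a single common state, and these single states would have to coincide across overlapping subsets, contradicting emptiness of the full intersection.

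Mechanically, I would proceed by contraposition: assume every $\mathcal{I}_{temp} \subset \mathcal{I}$ with $|\mathcal{I}_{temp}| \le p - 2\overline{s} + 1$ gives a \texttt{SAT} result, i.e.\ $\bigcap_{i \in \mathcal{I}_{temp}} \mathbb{H}_i \ne \emptyset$, and derive that $\bigcap_{i \in \mathcal{I}} \mathbb{H}_i \ne \emptyset$ as well. Take any subset of size exactly $p - 2\overline{s} + 1$; by the uniqueness coming from observability, its intersection is a singleton $\{x_0\}$. I would then show $x_0 \in \mathbb{H}_j$ for every remaining index $j \in \mathcal{I}$: for any such $j$, consider the set obtained by dropping one element of the original $(p - 2\overline{s}+1)$-subset and inserting $j$; its intersection is also a singleton by the same uniqueness, and because the two $(p - 2\overline{s}+1)$-subsets share $p - 2\overline{s}$ indices whose intersection already determines a unique state, the two singletons must agree. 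Propagating this across all $j$ shows $x_0$ lies in every $\mathbb{H}_i$, $i \in \mathcal{I}$, so the full intersection is nonempty.

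The main obstacle I anticipate is making the uniqueness-and-agreement step fully rigorous, specifically justifying that intersections of sufficiently large subcollections are singletons and that two such singletons sharing $p - 2\overline{s}$ common indices must coincide. This rests entirely on translating $2\overline{s}$-sparse $\tau$-observability (Definition of $s$-sparse $\tau$-observable, via the implication from $s$-sparse flatness) into the statement that $H_{\mathcal{J}}(x) = H_{\mathcal{J}}(x')$ with $|\mathcal{J}| \ge p - 2\overline{s}$ forces $x = x'$. I would need to handle carefully the boundary case $|\mathcal{J}| = p - 2\overline{s}$ versus strictly larger, and ensure the singleton-agreement propagation does not secretly require more than $p - 2\overline{s}$ shared indices. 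Once that observability-to-uniqueness dictionary is set up cleanly, the rest is a short combinatorial chase; the subtlety is purely in that translation and in the off-by-one bookkeeping of the cardinality $p - 2\overline{s} + 1$.
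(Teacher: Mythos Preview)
Your proposal is correct and rests on the same key insight as the paper's proof: $2\overline{s}$-sparse flatness forces the intersection $\bigcap_{i\in\mathcal{J}}\mathbb{H}_i$ to be (at most) a singleton whenever $|\mathcal{J}|\ge p-2\overline{s}$. The only difference is packaging: the paper argues directly by fixing one $(p-2\overline{s})$-subset $\mathcal{I}'\subset\mathcal{I}$ with nonempty intersection $\{x'\}$ and then exhibiting a single index $i\in\mathcal{I}\setminus\mathcal{I}'$ with $x'\notin\mathbb{H}_i$, whereas you run the contrapositive and propagate the singleton across overlapping subsets; the paper's route avoids the propagation step and is slightly shorter.
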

\begin{proof}
Consider any set of sensors $\mathcal{I}' \subset \mathcal{I}$ such that \mbox{$\vert \mathcal{I}' \vert = p - 2\overline{s}$} and $\bigcap_{i \in \mathcal{I}'} \mathbb{H}_i$ is not empty. If such set $\mathcal{I}'$ does not exist, then the result follows trivially. If the set $\mathcal{I}' $ exists, then:
\begin{align*}
\bigcap_{i \in \mathcal{I}'} \mathbb{H}_i \ne \emptyset 
&\Rightarrow \norm{Y_{\mathcal{I}'} - {H}_{u,{\mathcal{I}'}} (x')}^2 = 0 \quad \forall x' \in \bigcap_{i \in \mathcal{I}'} \mathbb{H}_i
\end{align*}
However, since the cardinality of $\mathcal{I}'$ is equal to $p - 2\overline{s}$, it follows from the $2\overline{s}$-sparse flatness that any state \mbox{$x' \in \bigcap_{i \in \mathcal{I}'} \mathbb{H}_i$} is distinguishable using $p - 2\overline{s}$ sensors, for which we conclude that the
intersection $\bigcap_{i \in \mathcal{I}'} \mathbb{H}_i$ is a single point, named $x'$.
Now, since  $\mathcal{T}$\textsc{-Solve.Check}$(\mathcal{I})$  is \texttt{UNSAT},  it follows from Proposition~\ref{prop:geom} that:
\begin{align*}
\bigcap_{i \in \mathcal{I}} \mathbb{H}_i = \emptyset &\Rightarrow 
\bigcap_{i \in \mathcal{I}'} \mathbb{H}_i \cap \bigcap_{i \in \mathcal{I}\setminus\mathcal{I}'} \mathbb{H}_i  = \emptyset 
\{x'\} \cap \bigcap_{i \in \mathcal{I}\setminus\mathcal{I}'} \mathbb{H}_i  = \emptyset,
\end{align*}
which in turn implies that there exists at least one sensor \mbox{$i \in \mathcal{I}\setminus\mathcal{I}'$} such that its set $\mathbb{H}_i$ does not include the point $x'$. Now, we define $\mathcal{I}_{temp}$ as $\mathcal{I}_{temp} = \mathcal{I}' \cup i$ and note that $\vert \mathcal{I}_{temp} \vert \le p - 2\overline{s} + 1$, 
which concludes the proof.
\end{proof}

 Based on the intuition in the proof of Lemma \ref{prop:minSet}, our algorithm works as follows. First, we construct the set of indices  $\mathcal{I}'$
 by picking any random set of $p - 2\overline{s}$ sensors. We  then search for one additional sensor $i'$ which can lead to a conflict with the sensors indexed by $\mathcal{I}'$. To do this, we call $\mathcal{T}$\textsc{-Solve.Check} by passing the set $\mathcal{I}_{temp} := \mathcal{I}' \cup i'$ as an argument. If the check returned \texttt{SAT}, then we label these sensors as ``non-conflicting'' and we repeat the same process by replacing the sensor indexed by $i'$
 with another sensor until we reach a conflicting set. It then follows from Lemma \ref{prop:minSet} that this process terminates revealing a collection of $p -2\overline{s} + 1$ conflicting sets. Once this collection is discovered, we stop by generating the following, more compact, certificate:
$$ \phi_{\text{cert}}:= \sum_{i \in \mathcal{I}_{temp}} b_i \ge 1.$$

Although the prescribed process will always terminate regardless of the selection of the initial set $\mathcal{I}'$ or the order followed to select $i'$, the execution time may change.  In Algorithm~\ref{alg:certificate1}, we implement a heuristic for the selection of the initial set $\mathcal{I}'$ and the succeeding indexes, inspired by the strategy we have adopted in the context of linear systems~\cite{Yasser_SMT}.
We are now ready to state the main result of this section.
\begin{theorem}
Let the nonlinear dynamical system $\Sigma_a$ defined in~\eqref{eq:system_attack} be $2\overline{s}$-sparse flat. Then, Algorithm~\ref{alg:smt} using the conflicting UNSAT certificate $\phi_{\text{cert}}$ in Algorithm~\ref{alg:certificate1} returns \mbox{$\eta = (x,b)$} such that:
$$ x^* - x \quad \land \quad \supp(b^*) \subseteq \supp(b)$$
%
Moreover, the upper bound on the number of iterations of Algorithm~\ref{alg:smt} is 
$\binom{p}{p - 2\overline{s}+1}$.
\label{prop:certifcate1}
\end{theorem}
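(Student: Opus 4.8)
The plan is to split the statement into a \emph{correctness} claim (upon termination the returned $\eta=(x,b)$ satisfies $x=x^*$ and $\supp(b^*)\subseteq\supp(b)$) and a \emph{complexity} claim (the loop runs at most $\binom{p}{p-2\overline{s}+1}$ times, which also establishes termination). Both reduce to machinery already in place: Theorem~\ref{th:sse}, Proposition~\ref{prop:geom}, and Lemma~\ref{prop:minSet}. The only genuinely new content is to check that replacing the trivial certificate by the conflicting certificate $\phi_{\text{cert}}=\sum_{i\in\mathcal{I}_{temp}}b_i\ge 1$ keeps the learnt clauses sound, and then to count how many distinct such clauses can ever be produced.

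For correctness I would first note that $2\overline{s}$-sparse flatness implies $2\overline{s}$-sparse $\tau$-observability (end of Section~\ref{sec:flatness}), so Theorem~\ref{th:sse} applies verbatim. The \texttt{while} loop in Algorithm~\ref{alg:smt} exits only when $\mathcal{T}$\textsc{-Solve.Check} returns \texttt{SAT}, i.e. only when the reconstructed $x$ makes $\norm{Y_{\overline{\supp(b)}}-H_{\overline{\supp(b)}}(x)}^2=0$ hold; since $\phi_B$ always enforces $\sum_i b_i\le\overline{s}$, the returned pair then satisfies $\phi$ in~\eqref{eq:phi}. Theorem~\ref{th:sse} gives $x=x^*$, and $\supp(b^*)\subseteq\supp(b)$ follows through Proposition~\ref{prop:x_b}. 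To be sure the loop never terminates on a spurious assignment nor stalls, I would verify that each learnt $\phi_{\text{cert}}$ is satisfied by the true solution: since $\mathcal{I}_{temp}$ is conflicting, Proposition~\ref{prop:geom} gives $\bigcap_{i\in\mathcal{I}_{temp}}\mathbb{H}_i=\emptyset$, while $x^*\in\mathbb{H}_i$ for every $i\in\overline{\supp(b^*)}$; hence $\mathcal{I}_{temp}\not\subseteq\overline{\supp(b^*)}$, so $b^*$ (and any $b$ with $\supp(b^*)\subseteq\supp(b)$) satisfies $\sum_{i\in\mathcal{I}_{temp}}b_i\ge 1$. The certificates therefore never exclude a target solution.

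For the iteration bound I would argue as follows. Whenever $\mathcal{T}$\textsc{-Solve.Check}$(\overline{\supp(b)})$ is \texttt{UNSAT}, the cardinality clause forces $|\overline{\supp(b)}|\ge p-\overline{s}>p-2\overline{s}$, so Lemma~\ref{prop:minSet} and Algorithm~\ref{alg:certificate1} produce a conflicting set $\mathcal{I}_{temp}=\mathcal{I}'\cup\{i'\}\subseteq\overline{\supp(b)}$ of size exactly $p-2\overline{s}+1$. Once the clause $\sum_{i\in\mathcal{I}_{temp}}b_i\ge 1$ is conjoined to $\phi_B$, every subsequent assignment must satisfy it, i.e. $\mathcal{I}_{temp}\not\subseteq\overline{\supp(b)}$ from then on; consequently that same set can never reappear as the conflicting set in a later iteration. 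Thus each \texttt{UNSAT} iteration consumes a distinct $(p-2\overline{s}+1)$-element subset of $\{1,\dots,p\}$, of which there are $\binom{p}{p-2\overline{s}+1}$, bounding the number of iterations and proving termination.

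I expect the main obstacle to be the non-repetition step underlying the count: one must confirm that Algorithm~\ref{alg:certificate1} always emits a set of exactly size $p-2\overline{s}+1$ drawn from the \emph{current} $\overline{\supp(b)}$ (so that the added clause truly forbids its recurrence), and that a previously learnt clause genuinely blocks any future assignment from making its whole index set attack-free. The reductions of the correctness part to Theorem~\ref{th:sse} and of soundness to Proposition~\ref{prop:geom} are comparatively routine.
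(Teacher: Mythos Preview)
Your proposal is correct and follows essentially the same approach as the paper: correctness is reduced to Theorem~\ref{th:sse} (via $2\overline{s}$-sparse flatness $\Rightarrow$ $2\overline{s}$-sparse $\tau$-observability), and the iteration bound comes from Lemma~\ref{prop:minSet} by counting the distinct $(p-2\overline{s}+1)$-element conflicting sets that can be emitted. Your write-up is in fact more thorough than the paper's terse proof, which omits the explicit soundness check on the learnt clauses and the non-repetition argument you spell out.
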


\begin{proof}
Correctness follows from Theorem \ref{th:sse} along with the $2\overline{s}$-flatness condition. The upper bound on the number of iterations of Algorithm \ref{alg:smt} can be derived as follows. First,  Lemma \ref{prop:minSet} ensures that each certificate $\phi_{\text{cert}}$ has at most $p - 2\overline{s} + 1$ variables. Since we know that the algorithm always terminates, the worst case would then happen when the solver exhaustively generates all the conflicting sets of cardinality $p - 2\overline{s} + 1$. This leads to a number of iterations equal to
$\binom{p}{p - 2\overline{s}+1}.$
\end{proof}


\begin{algorithm}[!t]
\caption{$\mathcal{T}\textsc{-Solve.Certificate}({\mathcal{I}}, x)$}
\begin{algorithmic}[1]
\STATE \textbf{Compute normalized residuals} 
\STATE $\quad r := \bigcup_{i \in  \mathcal{I}} \left \{r_i \right \}$, \\$ \qquad r_i := \norm{Y_i -  {H}_{u,i}(x) }^2/\norm{\nabla_x {H}_{u,i}}^2, \; i \in { \mathcal{I}}$;
\STATE \textbf{Sort the residual variables} 
\STATE $\quad r\_sorted := \text{sortAscendingly}(r)$;
\STATE \textbf{Pick the index corresponding to the maximum residual} 
\STATE $\quad \mathcal{I}\_max\_r :=  \text{Index}(r\_sorted_{\{\vert \mathcal{I} \vert, \vert \mathcal{I} \vert - 1,\ldots,  p - 2\overline{s} + 1\}})$;
\STATE $\quad \mathcal{I}\_min\_r :=  \text{Index}(r\_sorted_{\{1, \ldots, p - 2\overline{s} \} })$;
\STATE \textbf{Search linearly for the UNSAT certificate}
\STATE $\quad \text{status = \texttt{SAT}}; \quad \text{counter} = 1;$
\STATE $\quad \mathcal{I}\_temp :=  \mathcal{I}\_min\_r \cup \mathcal{I}\_max\_r_{counter}$;
\WHILE{status == \texttt{SAT}}
	\STATE (status, $x$) $:=  \mathcal{T}\textsc{-Solve.Check}(\mathcal{I}\_temp)$;
	\IF{status == \texttt{UNSAT}}
		\STATE $\phi_{\text{cert}}:= \sum_{i \in \mathcal{I}\_temp} b_i \ge 1$;
	\ELSE
		\STATE counter := counter  + 1;
		\STATE $\mathcal{I}\_{temp} :=  \mathcal{I}\_min\_r \cup \mathcal{I}\_max\_r_{counter}$;
	\ENDIF
\ENDWHILE
\STATE \textbf{return} $\phi_{\text{cert}}$
\end{algorithmic}
\label{alg:certificate1}
\end{algorithm}

\section{Case Study: Securing a Quadrotor Mission}
\label{sec:results}

%

We demonstrate the effectiveness of our detection algorithm by applying it to  a waypoint navigation mission for a quadrotor unmanned aerial vehicle (UAV), in which the UAV needs to cross a workspace from a starting point to a desired goal. The dynamical model of the quadrotor and its controller, based on~\cite{michael2010grasp, lupashin2010simple}, are summarized below.

\subsection{Dynamical Model}
The dynamical model of the quadrotor consists of twelve states which are:
\begin{itemize}
\item $\mathbf{p} = (p_x, p_y, p_z)$ is the quadrotor center of mass in the inertial frame $W$ (world frame).
\item $\mathbf{v} = (v_x, v_y, v_z)$ is the quadrotor linear velocity.
\item $\mathbf{\vartheta} = (\vartheta_\phi, \vartheta_\theta, \vartheta_\psi)$ is the quadrotor orientation (or attitude) expressed in terms of Euler angles: roll, pitch, yaw.
\item $\mathbf{\omega} =(\omega_{\phi}, \omega_{\theta},  \omega_{\psi} )$ is the quadrotor angular velocity.
\end{itemize}


%

The quadrotor is equipped with four motors. The thrust produced by the $i$th motor is denoted by $F_i$ and is directly proportional to the rotor speed. The resulting vertical force is denoted by $u_1$ and is equal to:
$$ u_1 = F_1 + F_2 + F_3 + F_4 - mg$$
where $m$ is the mass and $g$ is the gravitational acceleration. The
motors' thrust also induce three moments, on the quadrotor center of mass, denoted by $u_2, u_3, u_4$. These moments can be computed from the thrusts as follows:
%
%
%
%
\begin{equation*}\label{eq:u2}
\left(%
\begin{array}{c}
  u_2 \\
  u_3\\
  u_4 \\
\end{array}%
\right) =\left(%
\begin{array}{c c c c}
   l_{1x} & -l_{2x} & l_{3x} & -l_{4x}\\
  -l_{1y} & l_{2y} & l_{3y} & -l_{4y}\\
  \mu &\mu & -\mu &-\mu\\
\end{array}%
\right) \left(%
\begin{array}{c}
  F_1 \\
  F_2\\
  F_3 \\
  F_4 \\
\end{array}%
\right) 
\end{equation*}
where 
$l_{ix}$ and $l_{iy}$ are the $x$ and $y$ components of the distance between the motor $i$ and the center of the quadrotor, respectively, and $\mu$ is a constant representing the relationship between lift and drag. 

\subsection{Controller}
The controller is derived by linearizing the equations of motion and the motor models at an operating point that corresponds to the nominal hover state, i.e.~$\mathbf{p} = (p_x, p_y, p_z)$, $\mathbf{\vartheta} = (0, 0,  \psi)$, $\mathbf{v} = (0, 0, 0)$ and $\mathbf{\omega} = (0, 0, 0)$ under the assumptions of small roll and pitch angles, which leads to $\cos(\vartheta_\phi) = \cos(\vartheta_\theta) \approx 1$, $\sin(\vartheta_\phi) \approx \phi$, $\sin(\vartheta_\theta) \approx \vartheta_\theta$. The nominal values for the inputs at hover are $u_1 = mg$, $u_2 = u_3 = u_4 = 0$.

To control the quadrotor to follow a desired trajectory, we use a two-level decoupled control scheme consisting of a low-level attitude control, which usually runs at $1$KHz, and a high-level position control running at a lower rate $50$Hz.
%
%
The position control is used to track a desired trajectory characterized by $\mathbf{p}^{\textrm{des}}$ and $\mathbf{\vartheta}^{\textrm{des}}$. Using a PID feedback controller
 we can then control the position and velocity of the quadrotor to maintain the desired trajectory. 
%
Similarly, we can realize the attitude control using PD controllers.

\subsection{Securing the Quadrotor Trajectory}
The quadrotor is equipped with a GPS measuring the position vector and two inertial measurement units (IMUs), whose outputs are fused to generate an estimate for the body angular and linear velocities. We numerically simulate the model of the quadrotor and the controller. In our scenario, the quadrotor goal is to takeoff vertically and then move along a square trajectory. However, one of the IMU's output, the vertical velocity sensor, is attacked by injecting a sinusoidal signal on top of the actual sensor readings. As shown in Fig.~\ref{fig:exp} (bottom), the attack is injected after the quadrotor has completed the takeoff maneuver, and only along two parallel sides of the whole square trajectory.  

To implement the secure state reconstruction algorithm, \textsc{Imhotep-SMT} uses an approximate discretized model of the plant along with the sensor measurements. To discretize the model we use the same sampling time ($T_s = 20$~ms) of the controller. Moreover, we adopt a first-order forward Euler approximation scheme which preserves the differential flatness of the original system.
To accommodate the model mismatch due to the discrete approximation, as well as round-off errors, we replace the condition in line~\ref{line:check} of Algorithm \ref{alg:check} with $\norm{Y_{\mathcal{I}} - {H}_{u,{\mathcal{I}}}(x)} \le  \epsilon,$ where we set $\epsilon$ to $0.1$ in our experiments. 

Fig.~\ref{fig:nosmt} (top) shows the effect of the attack when the quadrotor operates without secure state reconstruction algorithm. As evident from the two corners of the square trajectory corresponding to coordinates $(2.5,0,1)$ and $(0,2.5,1)$, the injected attack harmfully impairs the stability of the quadrotor, due to incorrect state reconstruction, as shown in Fig.~\ref{fig:nosmt} (middle). 
Fig.~\ref{fig:smt} (top) shows instead the trajectory of the quadrotor when operated using \textsc{Imhotep-SMT} to perform secure state reconstruction. The estimation error on $v_z$ produced by $\textsc{Imhotep-SMT}$ is in the order of $10^{-2}$ m/s, 
and always bounded, where the bound depends on the error due to mismatch between the model used for estimation and the actual quadrotor dynamics (the controller is designed to be robust against bounded perturbations).
The state and the support of the attack are correctly estimated also in the presence of model mismatch: the quadrotor is able to follow the required trajectory and achieve its goal. 
Finally, the average execution time of $16.1$ ms (smaller than the  $20$ms sampling time of the position controller) on an Intel Core i7 3.4-GHz processor with 8~GB of memory, is compatible with several real-time applications. 


\begin{figure*}[!t]
\centering
\begin{tabular}{c|c}
\hspace{-9mm}
\subfigure[Quadrotor under attack
]{\label{fig:nosmt}
\begin{tabular}{c}
{\includegraphics[width=0.47\textwidth]{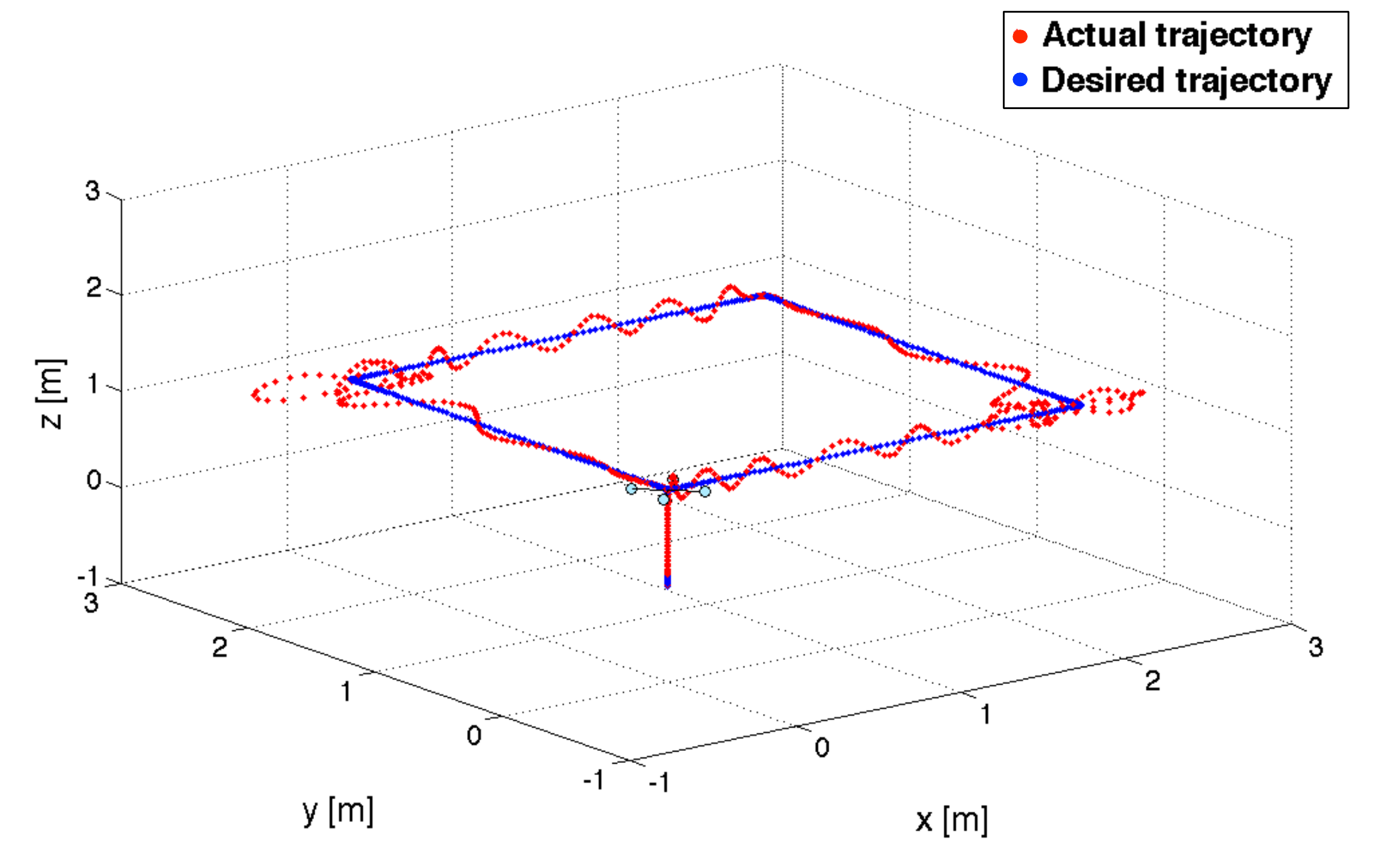} 
}\\
{\includegraphics[width=0.44\textwidth]{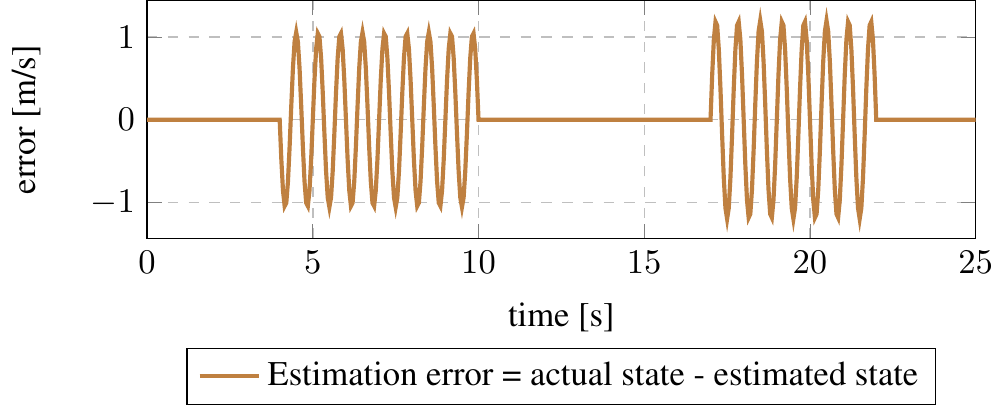} 
}\\
{\includegraphics[width=0.44\textwidth]{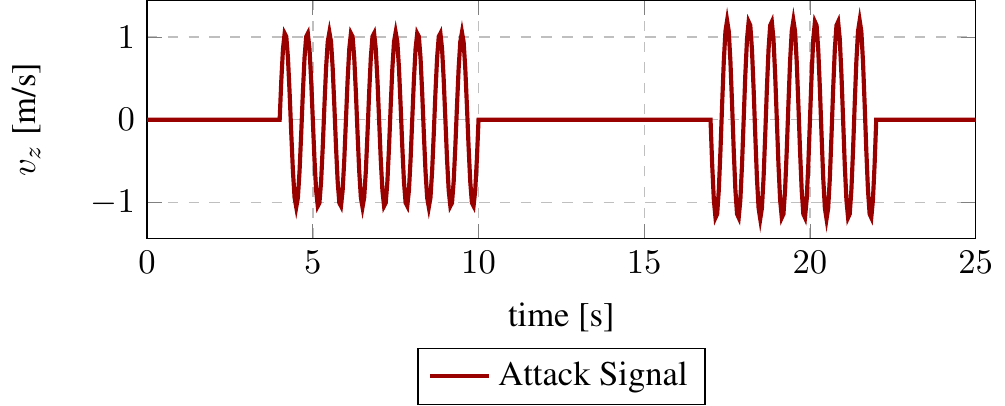} 
}
\end{tabular}
}
&
\subfigure[Quadrotor under attack - \textsc{Imhotep-SMT}
]{\label{fig:smt}
\begin{tabular}{c}
{\includegraphics[width=0.48\textwidth]{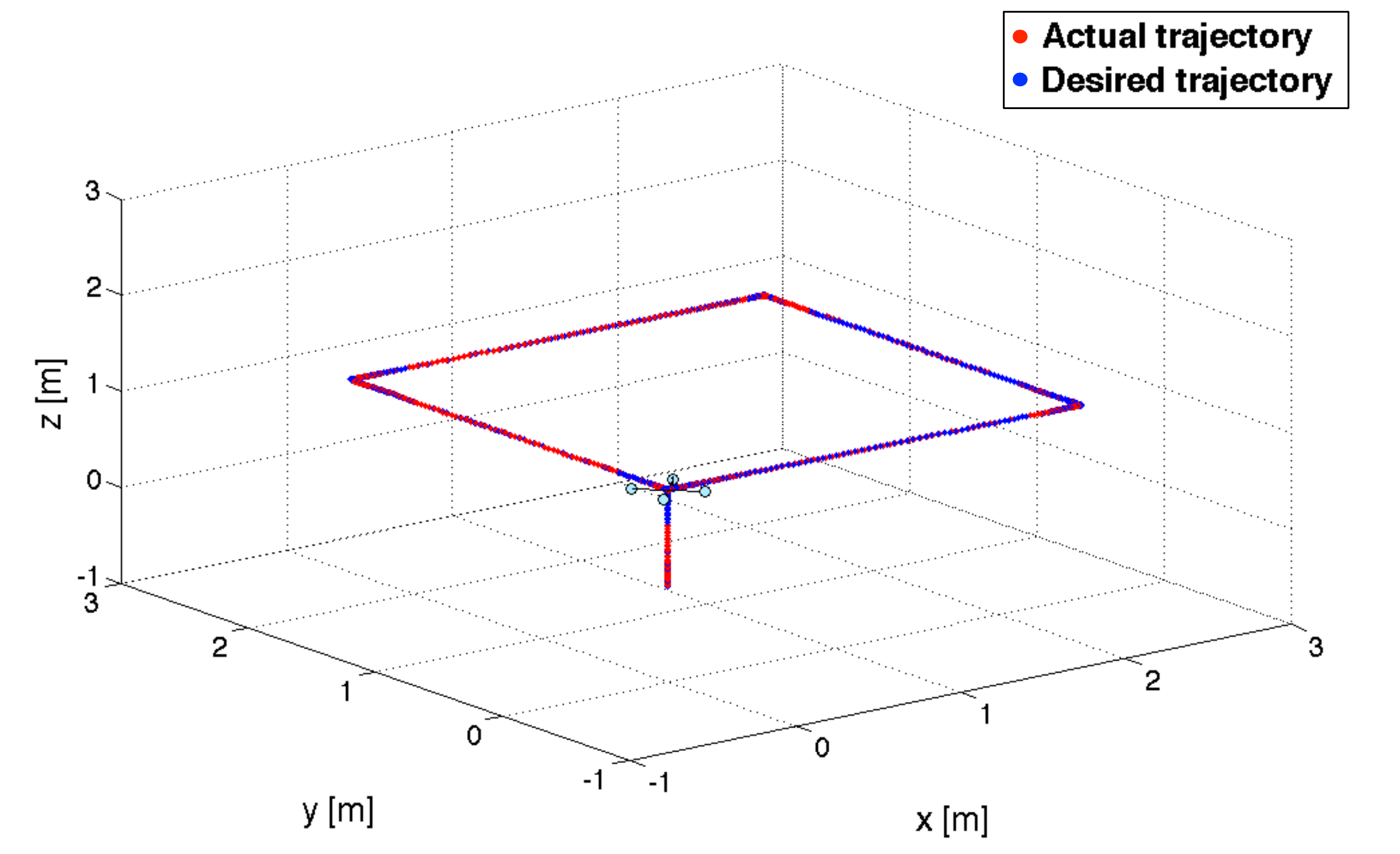} 
}\\
{\includegraphics[width=0.44\textwidth]{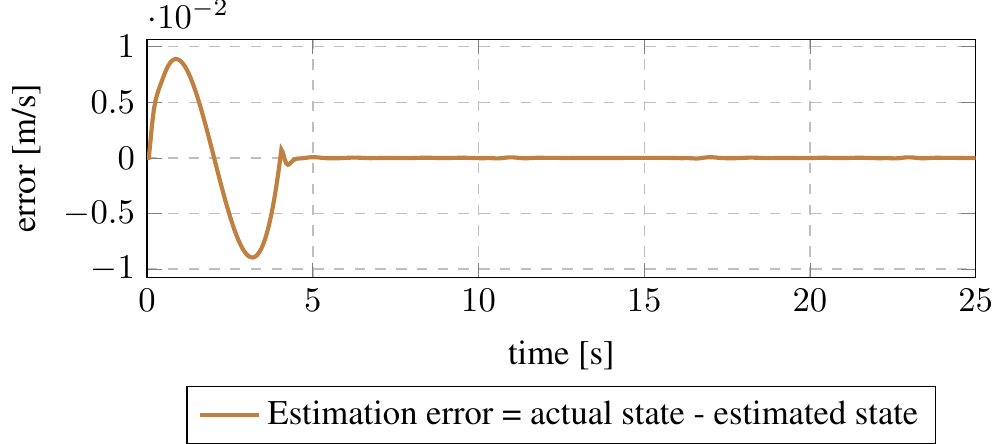} 
}\\
{\includegraphics[width=0.44\textwidth]{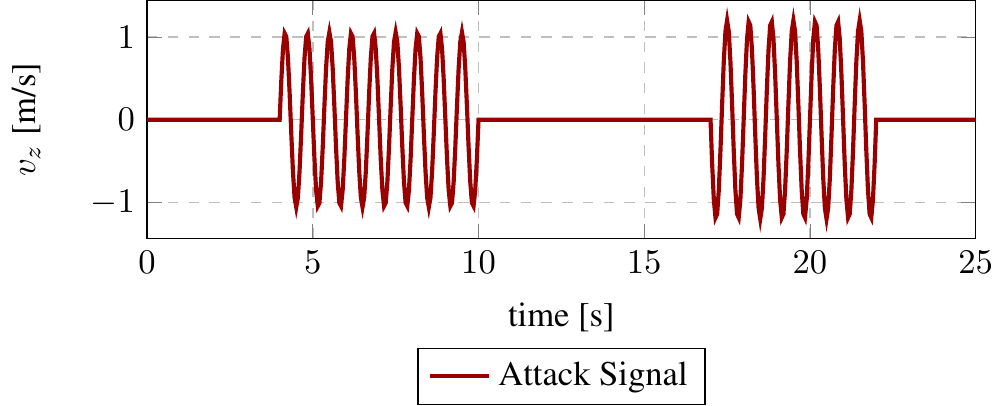} 
}
\end{tabular}
}
\end{tabular}
\caption{Performance of the quadrotor under sensor attack: (a)  without secure state reconstruction algorithm, and (b) with \textsc{Imhotep-SMT}. For each scenario, we show the quadrotor  trajectory (top), the estimation error on the quadrotor vertical velocity (middle), and the attack signal (bottom).}
\label{fig:exp}
\vspace{-4mm}
\end{figure*}
\section{Conclusions}
\label{sec:conclusion}

We have investigated, for the first time, the state reconstruction problem from a set of adversarially
attacked sensors for a class of nonlinear systems, namely differentially flat systems. Given an upper bound $\overline{s}$ on the number of  attacked sensors, we showed that $2\overline{s}$-observability is a necessary and sufficient condition for reconstructing the state in spite of the attack. We have then proposed a Satisfiability Modulo Theory based detection algorithm for differentially flat systems, by extending our previous results, reported in \cite{Yasser_SMT_ACC, Yasser_SMT}, to differentially flat systems. Numerical results show that secure state estimation in complex nonlinear systems, such as in waypoint navigation of a quadrotor under sensor attacks, can indeed be performed with our algorithm in an accurate and efficient way.

\bibliographystyle{IEEEtran}
\bibliography{bibliography2}

\end{document}